\documentclass[11pt, reqno]{amsart}
\usepackage{mathtools}

\usepackage[usenames,dvipsnames]{color}
\usepackage[OT2, T1]{fontenc}
\usepackage{url}
\usepackage{amsmath}
\usepackage{array}
\usepackage{graphicx}
\usepackage{amssymb}
\usepackage{amsthm}
\usepackage[colorlinks=true,linkcolor=blue,citecolor=orange]{hyperref}
\usepackage{hyperref}
\usepackage{paralist}
\usepackage{colonequals}	
\usepackage{color}
\usepackage{mathabx}			
\usepackage{amscd}
\usepackage[all,cmtip]{xy}			
\usepackage{sseq}				
\usepackage{verbatim}
\usepackage{parskip}
\usepackage{microtype}
\usepackage[in]{fullpage}
\usepackage{mathrsfs} 	
\usepackage{cleveref}
\usepackage{enumitem}
\usepackage{moreenum}	
\usepackage[alphabetic,lite]{amsrefs} 
\usepackage{stmaryrd}	
\usepackage[foot]{amsaddr}
\usepackage{subfiles}
\usepackage{ragged2e}
\usepackage{stackengine}

\newcommand{\brems}{\begin{rems} \hfill \begin{enumerate}[label=\b{\thenumberingbase.},ref=\thenumberingbase]}

\newcommand{\erems}{\end{enumerate} \end{rems}}
\newcommand{\begs}{\begin{egs} \hfill \begin{enumerate}[label=\b{\thenumberingbase.},ref=\thenumberingbase]}

\newcommand{\eegs}{\end{enumerate} \end{egs}}

\newcommand{\bsm}{\begin{smallmatrix}}
\newcommand{\esm}{\end{smallmatrix}}
\newcommand{\blem}{\begin{lemma}}
\newcommand{\elem}{\end{lemma}}

\newcommand{\bconj}{\begin{conj}}
\newcommand{\econj}{\end{conj}}
\newcommand{\bprob}{\begin{Problem}}
\newcommand{\eprob}{\end{Problem}}

\newcommand{\bq}{\begin{Q}}
\newcommand{\eq}{\end{Q}}
\newcommand{\benum}{\begin{enumerate}[label={{\upshape(\alph*)}}]}
\newcommand{\benuma}{\begin{enumerate}[label={{\upshape(\arabic*)}}]}
\newcommand{\benumb}{\begin{enumerate}[label={{\upshape\b{\arabic*.}}}]}
\newcommand{\benumr}{\begin{enumerate}[label={{\upshape(\roman*)}}]}
\newcommand{\eenum}{\end{enumerate}}
\newcommand{\bitem}{\begin{itemize}}
\newcommand{\eitem}{\end{itemize}}
\newcommand{\bc}{}
\newcommand{\bd}{\begin{defn}}
\newcommand{\ed}{\end{defn}}

\newcommand{\beg}{\begin{eg}}
\newcommand{\eeg}{\end{eg}}

\newcommand{\bcl}{\begin{claim}}
\newcommand{\ecl}{\end{claim}}

\newcommand{\q}{\quad}

\newcommand{\ba}{\begin{aligned}}
\newcommand{\ea}{\end{aligned}}
\newcommand{\be}{\begin{equation}}
\newcommand{\ee}{\end{equation}}
\newcommand{\bpf}{\begin{proof}}
\newcommand{\epf}{\end{proof}}
\newcommand{\bthm}{\begin{thm}}
\newcommand{\ethm}{\end{thm}}
\newcommand{\bprop}{\begin{prop}}
\newcommand{\eprop}{\end{prop}}
\newcommand{\bcor}{\begin{cor}}
\newcommand{\ecor}{\end{cor}}
\newcommand{\brem}{\begin{rem}}
\newcommand{\erem}{\end{rem}}

\usepackage{aliascnt}
\newaliascnt{numberingbase}{subsection}
\numberwithin{equation}{numberingbase}

\newtheoremstyle{thms}{0.5em}{0.5em}{\itshape}{}{\bfseries}{.}{ }{}
\theoremstyle{thms}
\newtheorem{conj}[numberingbase]{Conjecture}

\newtheorem{cor}[numberingbase]{Corollary}

\newtheorem{lemma}[numberingbase]{Lemma}

\newtheorem{lem}[numberingbase]{Lemma}

\newtheorem{prop}[numberingbase]{Proposition}

\newtheorem{Q}[numberingbase]{Question}
\newtheorem{thm}[numberingbase]{Theorem}

\newtheoremstyle{claims}{0.5em}{0.5em}{}{}{\itshape}{.}{ }{}
\theoremstyle{claims}
\newtheorem{claim}[equation]{Claim}

\newtheoremstyle{defs}{0.5em}{0.5em}{}{}{\bfseries}{.}{ }{}
\theoremstyle{defs}
\newtheorem{defn}[numberingbase]{Definition}

\newtheorem{eg}[numberingbase]{Example}
\newtheorem*{egs}{Examples}
\newtheorem{rem}[numberingbase]{Remark}

\newtheorem*{rems}{Remarks}

\Crefname{claim}{Claim}{Claims}
\Crefname{bclaim}{Claim}{Claims}
\Crefname{sublemma}{Lemma}{Lemmas}
\Crefname{conj}{Conjecture}{Conjectures}
\Crefname{cor}{Corollary}{Corollaries}
\Crefname{defn}{Definition}{Definitions}
\Crefname{eg}{Example}{Examples}
\Crefname{prop}{Proposition}{Propositions} 
\Crefname{Q}{Question}{Questions}
\Crefname{rem}{Remark}{Remarks}
\Crefname{thm}{Theorem}{Theorems}
\Crefname{Theorem}{Theorem}{Theorems}
\Crefname{variant}{Variant}{Variants}
\Crefname{caution}{Caution}{Cautions}

\theoremstyle{thms}
\newtheorem{thm-tweak}[subsection]{Theorem}
\Crefname{thm-tweak}{Theorem}{Theorems}
\newtheorem{lemma-tweak}[subsection]{Lemma}
\Crefname{lemma-tweak}{Lemma}{Lemmas}
\newtheorem{cor-tweak}[subsection]{Corollary}
\Crefname{cor-tweak}{Corollary}{Corollaries}
\newtheorem{prop-tweak}[subsection]{Proposition}
\Crefname{prop-tweak}{Proposition}{Propositions} 
\newtheorem{conj-tweak}[subsection]{Conjecture}
\Crefname{conj-tweak}{Conjecture}{Conjectures} 
\newtheorem{q-tweak}[subsection]{Question}
\Crefname{q-tweak}{Question}{Questions} 

\theoremstyle{defs}
\newtheorem{defn-tweak}[subsection]{Definition}
\Crefname{defn-tweak}{Definition}{Definitions}
\newtheorem{eg-tweak}[subsection]{Example}
\Crefname{eg-tweak}{Example}{Examples}
\newtheorem*{rems-tweak}{Remarks}
\newtheorem{rem-tweak}[subsection]{Remark}
\Crefname{rem-tweak}{Remark}{Remarks}

\newtheoremstyle{subsection-tweak}
   {2pt}
   {3pt}%
   {}
   {}%
   {\bfseries}
   {}%
   {.5em}
   {\thmnumber{\@{#1}{}\@{#2}.}%
    \thmnote{~{\bfseries#3.}}}    
    
\theoremstyle{subsection-tweak}
\newtheorem{pp}[numberingbase]{}
\newcommand{\bpp}{\begin{pp}}
\newcommand{\epp}{\end{pp}}

\theoremstyle{subsection-tweak}
\newtheorem{pp-tweak}[subsection]{}

\makeatletter
\def\@tocline#1#2#3#4#5#6#7{
    \begingroup 
    \@ifempty{#4}{}{}

    \parindent\z@ \leftskip#3\relax \advance\leftskip\@tempdima\relax
    #5\hskip-\@tempdima
      \ifcase #1
       \or\or \hskip 2em \or \hskip 1em \else \hskip 3em \fi%
      #6\nobreak\relax
    \dotfill\hbox to\@pnumwidth{\@tocpagenum{#7}}\par
    \nobreak
    \endgroup
 }
 \def\l@section{\@tocline{1}{0pt}{1pc}{}{}}

\renewcommand{\tocsection}[3]{%
  \indentlabel{\@ifnotempty{#2}{\makebox[1.3em][l]{%
    \ignorespaces#1 \bfseries{#2}.\hfill}}}\bfseries{#3}
    \vspace{-5pt}}

\renewcommand{\tocsubsection}[3]{%
  \indentlabel{\@ifnotempty{#2}{\hspace*{-0.5em}\makebox[2.1em][l]{%
    \ignorespaces#1#2.\hfill}}}#3
    \vspace{-5pt}}

\makeatother 
\makeatletter
\@namedef{subjclassname@2020}{%
  \textup{2020} Mathematics Subject Classification}
\makeatother

\author{Nguyen Mac Nam Trung}
\address{\scriptsize Institute of Mathematics, Vietnam Academy of Science and Technology, 18 Hoang Quoc Viet, 10072 Hanoi, Vietnam}
\email{\scriptsize nmntrung@math.ac.vn}

\date{\today}

\begin{document}

\subjclass[2020]{Primary 14L10; Secondary 11E72, 14G17.}
\keywords{Algebraic group, pseudo-reductive group, Serre conjecture II, torsor.}

\title{Serre conjecture II for pseudo-reductive groups}

\maketitle

\begin{abstract} The Serre conjecture II predicts that every torsor under a semisimple, simply connected, algebraic group over a field of cohomological dimension at most $2$ and of degree of imperfection at most $1$ has a rational point. We generalize this conjecture to pseudo-reductive groups and prove their equivalence. In particular, we show that every torsor under a pseudo-semisimple, simply connected group over a global function field or a non-archimedean local field always has a rational point.
 \end{abstract}

\vspace{-45pt}

\hypersetup{
    linktoc=page,  
}
\renewcommand*\contentsname{}
\q\\
\tableofcontents

\section{Serre conjecture II} 
In 1962, Serre \cite[Conjecture~II, page~65]{Ser62} predicted that every torsor under a semisimple, simply connected, algebraic group over a perfect field of cohomological dimension at most $2$ should be trivial. Serre \cite[Remarques, page~146]{Ser94} also formulated the conjecture over an imperfect field. He anticipated that over a field of cohomological dimension at most $2$ and of degree of imperfection at most $1$, every torsor under a semisimple, simply connected, algebraic group should be trivial. This statement is also known as Serre~conjecture~II. The conjecture was settled for global function fields by Harder ~\cite[Satz A]{Har75} and non-archimedean local fields by Bruhat\textendash Tits~\cite[Th\'eor\`em~4.7(ii)]{BT87}. See the survey \cite{Gil10} for a detailed review of the state of the art on the Serre~conjecture~II. The goal of this article is to extend the results of Harder and Bruhat\textendash Tits to pseudo-reducitve groups. In fact, we prove a more general result: we formulate a Serre conjecture~II for pseudo-reductive groups and prove this is equivalent to the original Serre~conjecture~II.

The arithmetic analogue of semisimple algebraic groups are pseudo-semisimple groups. Recall that a \emph{pseudo-reductive} $k$-group is a connected, affine, smooth $k$-group that has
no nontrivial unipotent normal $k$-subgroups and a \emph{pseudo-semisimple} group is a perfect pseudo-reductive group, where $k$ is a field. The analogies between pseudo-semisimple groups and semisimple algebraic groups is a foundational topic in the theory of pseudo-reductive groups, developed by Tits and Conrad-Gabber-Prasad \cite{CGP15,CP16}. We find that the notion of simply connectedness for connected, affine, smooth algebraic groups proposed in \cite[2.5.2]{BCS25} is a perfect analogy for the notion of simply connectedness for semisimple groups, in particular, to formulate a Serre conjecture II for pseudo-reductive groups: for a field $k,$ an affine, connected, smooth $k$-group $G$ is \emph{simply connected} if the $\overline{k}$-group $G_{\overline{k}}/\mathscr{R}_{\mathrm{u},\,\overline{k}}(G)$ is semisimple and simply connected, where $\mathscr{R}_{\mathrm{u},\,\overline{k}}(G_{\overline{k}})$ denotes the geometric unipotent radical of $G$. 
\bconj[Serre conjecture II for pseudo-reductive groups] Let $k$ be a field of cohomological dimension at most 2 and of degree of imperfection at most 1. For a pseudo-semisimple, simply connected $k$-group $G$, then $$\mathrm{H}^1(k,G)=\{\ast\}.$$ 
\econj
\bthm[Corollary \ref{main-thm}]\label{main-thm-intro} Let $k$ be a field of cohomological dimension at most $2$ and of degree of imperfection at most $1.$ The following are equivalent
\benumr 
\item  $\mathrm{H}^1(k,G)=\{*\}$ for every semisimple, simply connected $k$-group $G;$
    \item $\mathrm{H}^1(k,G)=\{*\}$ for every pseudo-semisimple, simply connected $k$-group $G.$
\eenum
\ethm

By the known cases of the classical Serre conjecture II, we deduce a new vanishing theorem.
\begin{cor} Let $k$ be a field and $G$ a pseudo-semisimple, simply connected $k$-group.
\benum
    \item If $k$ is a non-archimedean local field, then $\mathrm{H}^1(k,G)=\{\ast\}.$
    \item If $k$ is a global function field, then $\mathrm{H}^1(k,G)=\{\ast\}.$
\eenum
\end{cor}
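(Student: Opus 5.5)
The corollary follows from \Cref{main-thm-intro} once we check its hypotheses for the two classes of fields and recall the classical vanishing results.

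First, a non-archimedean local field $k$ has $\mathrm{cd}(k)\le 2$, and in positive characteristic $k\simeq \mathbb{F}_q((t))$ has $[k:k^p]=p$. A global function field $k$ (finite type of transcendence degree $1$ over $\mathbb{F}_q$) also has $[k:k^p]=p$, and $\mathrm{cd}(k)\le 2$: its $p$-cohomological dimension is at most $1$ because $k$ has characteristic $p$, while its $\ell$-cohomological dimension for $\ell\neq p$ is $2$ (Serre, \emph{Galois Cohomology}). In characteristic $0$ the degree of imperfection is $0$, so in every case the degree of imperfection is at most $1$.

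Second, we verify condition (i) of \Cref{main-thm-intro} for these fields. For non-archimedean local fields this is Bruhat--Tits \cite[Th\'eor\`em~4.7(ii)]{BT87}, valid in all characteristics. For global function fields it is Harder \cite[Satz A]{Har75}; this uses that there are no real places, which gives the Hasse principle for simply connected groups together with local vanishing at every place.

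Applying the implication (i)$\Rightarrow$(ii) of \Cref{main-thm-intro} then gives $\mathrm{H}^1(k,G)=\{\ast\}$ for every pseudo-semisimple, simply connected $k$-group $G$. In characteristic $0$ we could instead note that $G$ is itself semisimple and simply connected, but the uniform argument already covers this case.

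All of the substance lies in \Cref{main-thm-intro}, which we may assume here. The only step requiring care is matching the cited classical theorems to the exact hypotheses: in particular, using the version of Bruhat--Tits that holds for local fields of positive characteristic, and using Harder's theorem for arbitrary, not necessarily split, simply connected groups over function fields.
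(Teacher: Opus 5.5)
Your proposal is correct and follows the paper's argument. The paper also obtains the corollary from the implication (i)$\Rightarrow$(ii) of Theorem~\ref{main-thm-intro}, with (i) supplied by Bruhat--Tits for local fields and by Harder for global function fields; your explicit checks ($\mathrm{cd}_p\le 1$ in characteristic $p$, $\mathrm{cd}_\ell=2$ for $\ell\neq p$, and $[k:k^p]=p$) simply spell out the hypotheses the paper leaves implicit.
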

\bpp[An overview of the proof and of the paper] The first step of the proof is to reduce Theorem \ref{main-thm-intro} to absolutely pseudo-simple $k$-groups. We utilize the map from the product of minimal pseudo-simple normal $k$-subgroups of $G$ and the fact that every pseudo-semisimple, simply connected $k$-group $G$ is always of minimal type to reduce to the case when $G$ is pseudo-simple. By a standard Galois descent argument, we even reduce to the case when $G$ is absolutely pseudo-simple.  

For an absolutely pseudo-simple, simply connected $k$-group $G$, the comparison map
$$i_G\colon G\to \mathrm{Res}_{k'/k}(G_{k'}/\mathscr{R}_{\mathrm{u},\,k'}(G_{k'}))$$
is an isomorphism when $\mathrm{char}(k)>3.$ We recall the Shapiro lemma \cite[Chapitre IV, 2.3, Lemme]{Oes84} (see also the more general result from \cite[Lemma 4.1.1]{BCS25}): for a finite field extension $k'/k$ and a smooth, affine $k'$-group $G'$,
$$\mathrm{H}^1(k,\mathrm{Res}_{k'/k}(G'))=\mathrm{H}^1(k',G'),$$
where $\mathrm{Res}_{k'/k}(G')$ denotes the restriction of scalar of $G'$ over $k'/k.$ Thus, we could reduce to the classical Serre conjecture II. When $\mathrm{char}(k)<3,$ we will need the study of basic exotic groups and basic non-reduced groups. More precisely, using the classification theory of pseudo-reductive groups developed in \cite{CGP15, CP16}, we could show that if $i_G$ is not an isomorphism then there is a finite field extension $k'/k$ and a $k'$-group $G'$ such that 
$$G\simeq \mathrm{Res}_{k'/k}(G'),$$
where $G'$ is either basic exotic or basic non-reduced. The conclusion is then followed by the triviality of torsor under basic non-reduced groups and a bijection between the set of torsors under basic exotic groups and that of semisimple, simply connected groups. 

This article is organized as follows. In section $2$, we review the classification of pseudo-reductive groups. In section $3$, we prove a decomposition result and a structure result for pseudo-semisimple, simply connected groups. As an application, we derive the main theorem \ref{main-thm-intro}.
\epp 
\bpp[Conventions and notation] For a field $k,$ let $k^s$ be a choice of its separable closure. The \emph{degree of imperfection} of $k$ when $p\coloneqq \mathrm{char}(k)>0,$ is a nonnegative integer $r$ (if it exists) such that $k/k^p$ is of degree $p^r.$ If $\mathrm{char}(k)=0,$ the degree of imperfection is $0.$ For a prime $\ell,$ the \emph{$\ell$-cohomological dimension} of $k$ is the smallest nonnegative integer $n$ such that for every torsion $\mathrm{Gal}(k^s/k)$-module $A$, the $\ell$-primary torsion subgroup of its $i$th Galois cohomology group  $\mathrm{H}^i(k^s/k,A)$ is trivial for all $i>n.$ If no such $n$ exists, the \emph{$\ell$-cohomological dimension} of $k$ is~$+\infty.$ The \emph{cohomological dimension} of $k$ is the supremum of all of its $\ell$-cohomological dimensions for different primes $\ell.$

The \emph{$k$-unipotent radical} of a connected, affine, smooth $k$-group $G$, denoted by $\mathscr{R}_{\mathrm{u},\,k}(G)$, is the maximal connected, smooth, unipotent normal $k$-subgroup of $G.$ A connected, affine, smooth $k$-group $G$ is
\begin{itemize}
    \item \emph{pseudo-reductive} if $\mathscr{R}_{\mathrm{u},\,k}(G)=1$;
    \item \emph{perfect} if it is equal to its derived subgroup;
    \item \emph{simply connected} if the reductive $\overline{k}$-group $G_{\overline{k}}/\mathscr{R}_{\mathrm{u},\,\overline{k}}(G_{\overline{k}})$ is perfect and simply connected.
\end{itemize}
A pseudo-reductive $k$-group is \emph{pseudo-semisimple} if it is perfect. A connected, affine, smooth $k$-group is \emph{pseudo-simple} if it is non-commutative and it does not contain any connected, smooth, normal subgroup except $1$ and itself. A $k$-group $G$ is \emph{absolutely pseudo-simple} if $G_{k^s}$ is pseudo-simple.
\epp 
\subsection*{Acknowledgements} I would like to thank my advisor K\k{e}stutis \v{C}esnavi\v{c}ius for his encouragement, helpful discussions and suggestions. Most of the result presented here was carried out during the author’s stay at the Sorbonn\'e Universit\'e, Institut de Math\'ematiques de Jussieu-Paris Rive Gauche. I would like to thank K\k{e}stutis \v{C}esnavi\v{c}ius again for his invitation and support during my visit.
\section{Recollections on pseudo-reductive groups}
Throughout this section, $k$ is a field and $p\coloneqq \mathrm{char}(k)$. The key result (cf.~Lemma \ref{structure-standard}) is that every absolutely pseudo-simple, simply connected $k$-group is isomorphic to the restriction of scalars of some semisimple, simply connected group defined over a finite field extension of $k,$ when $\mathrm{char}(k)>3.$ The mechanism
 to measure how a pseudo-reductive group is near to being a restriction of scalars of a reductive group is the comparision map. We will review the comparison map and the key result in subsection \ref{comp-map}.
\bpp[The comparison map $i_G$]\label{comp-map} 
Let $G$ be a connected, affine, smooth $k$-group and let $k'$ be the field of definition of  $\mathscr{R}_{\mathrm{u},\,\overline{k}}(G_{\overline{k}}).$ There is a natural map $$i_G\colon G\to \mathrm{Res}_{k'/k}(G_{k'}^{\mathrm{red}})$$ corresponding to the quotient map $G_{k'} \to G_{k'}^{\mathrm{red}}\coloneqq G_{k'}/\mathscr{R}_{\mathrm{u},\,k'}(G_{k'}).$ We call $i_G$ the \emph{comparison map} of $G.$ A pseudo-reductive $k$-group $G$ is \emph{of minimal type} if $\ker(i_G)\cap Z_G(T)=1$ for some maximal $k$-torus $T\subset G$ (equivalently, for every maximal $k$-torus $T\subset G$, cf. \cite[Proposition~9.4.2]{CGP15}). By \cite[Remark~4.3.2, Proposition~5.3.3]{CP16}, every pseudo-semisimple, simply connected $k$-group is of minimal type except possibly when $p=2$ and $[k:k^2]\ge 4.$   

Let $k'$ be a finite reduced $k$-algebra. Let $G'$ be a $k'$-group such that its fiber over each factor field of $k'$ is reductive and let $T'$ be a maximal $k'$-torus of $G'$. The conjugation action of $T'$ on $G'$ induces an action of $T'/Z_{G'}$ on $G'.$ Via the functoriality, $\mathrm{Res}_{k'/k}(T'/Z_{G'})$ naturally acts on $\mathrm{Res}_{k'/k}(G).$ Given a pair $(C,\phi)$ of a commutative, pseudo-reductive $k$-group $C$ and a $k$-homomorphism $\phi\colon  \mathrm{Res}_{k'/k}(T')\to C$ such that the $k$-homomorphism $\mathrm{Res}_{k'/k}(T')\to \mathrm{Res}_{k'/k}(T'/Z_{G'})$ factors through $\phi$, the $k$-group $C$ acts on $\mathrm{Res}_{k'/k}(G')$ through the $k$-homomorphism $C\to \mathrm{Res}_{k'/k}(T'/Z_{G'}).$ Consider the inclusion $\iota \colon \mathrm{Res}_{k'/k}(T')\hookrightarrow \mathrm{Res}_{k'/k}(G')$ and the twisted diagonal map
$$\alpha\colon \mathrm{Res}_{k'/k}(T')\to \mathrm{Res}_{k'/k}(G')\rtimes C, \quad t'\mapsto (\iota(t')^{-1},\phi(t')).$$
By \cite[Proposition 1.4.3]{CGP15}, $\mathrm{coker}(\alpha)$ is a pseudo-reductive $k$-group. Every $k$-group $G$ that arises as $\mathrm{coker}(\alpha)$ for some $(G',T',C,\phi)$ is called \emph{standard}. The next lemma states that if an absolutely pseudo-simple, simply connected $k$-group $G$ is standard and the root system of $G_{k^s}$ is reduced then $G$ is isomorphic to a restriction of scalars of an absolutely simple, simply connected $k'$-group for some purely inseparable finite extension $k'$ of $k$. We also recall here some criteria for standardness and reducedness of root system.
\epp 
\begin{lem}\label{structure-standard} Let $G$ be a pseudo-reductive $k$-group and set $\overline{G}\coloneqq G_{\overline{k}}^{\mathrm{red}}.$
\benum
    \item \label{rs-reduced} The root system of $G_{k^s}$ is reduced unless $k$ is imperfect, $p=2$ and $\overline{G}$ has a connected, simple, semisimple, simply connected normal $\overline{k}$-subgroup of type $\mathrm{C}_n$ with $n\ge 1$.
    \item \label{standard}The $k$-group $G$ is standard except possibly when $p\in \{2,3\}$ and the Dynkin diagram of $\overline{G}$ either contains an edge with multiplicity $p$ or has an isolated vertex when $p=2.$
    \item \label{comp-isom}If $G$ is an absolutely pseudo-simple, simply connected, standard and the root system of $G_{k^s}$ is reduced, then the comparison map $i_G$ is an isomorphism.
\eenum
\end{lem}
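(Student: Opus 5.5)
Parts (a) and (b) are recollections from the structure theory in \cite{CGP15}, and I would prove them by citation plus a reduction to the absolutely pseudo-simple case. Part (c) is the one needing an actual argument, which I would assemble from the standard presentation.

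For (a), the root system of $G_{k^s}$ is a union of the root systems of its $k^s$-pseudo-simple factors. For each such factor, \cite[Theorem~2.3.10]{CGP15} says the root system $\Phi(G_{k^s},T_{k^s})$ is reduced unless $p=2$ and $k$ is imperfect. In that case the only non-reduced irreducible possibility is $\mathrm{BC}_n$. Its reduced quotient modulo the geometric unipotent radical has root system $\mathrm{C}_n$ with $n\ge1$, where $\mathrm{C}_1=\mathrm{A}_1$. Passing to the simply connected cover of $\overline{G}$ does not change the type, so a simple factor of type $\mathrm{C}_n$ must occur in $\overline{G}$. For (b), I would invoke the classification theorem \cite[Theorem~5.1.1]{CGP15}: for $p\notin\{2,3\}$ every pseudo-reductive group is standard, and for $p\in\{2,3\}$ the same holds under the stated Dynkin-diagram restrictions on $\overline{G}$. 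I would reduce to the absolutely pseudo-simple factors via \cite[Proposition~3.1.8]{CGP15}, so that the condition can be read off factor by factor.

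For (c), I would write $G=\mathrm{coker}(\alpha)$ for a standard presentation $(G',k',T',C)$. Since $G$ is absolutely pseudo-simple, I would first check that $k'$ can be taken to be a field, purely inseparable over $k$, and $G'$ absolutely simple. Because $G$ is perfect, it equals the image of $\mathrm{Res}_{k'/k}(G')$. This image is a central quotient of $\mathrm{Res}_{k'/k}(G')$ by a subgroup of $\mathrm{Res}_{k'/k}(Z_{G'})$, using \cite[Proposition~5.3.1, Corollary~A.7.11]{CGP15}.

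Next I would use that $G$ is simply connected. Then $\overline{G}$ is simply connected, and $\overline{G}$ is identified with $G'_{\overline{k}}$ modulo a central subgroup. Hence $G'$ may be replaced by its simply connected cover, and the map $\mathrm{Res}_{k'/k}(G')\to G$ has kernel central and of multiplicative-type-like nature. I would then show this kernel is trivial by comparing with the composite $\mathrm{Res}_{k'/k}(G')\to G\xrightarrow{i_G}\mathrm{Res}_{k'/k}(G'')$. Here $k'$ is the field of definition of the geometric unipotent radical of $G$, and $G''=G_{k'}^{\mathrm{red}}\simeq G'$ because $G'$ is simply connected and the root system is reduced. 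This composite is the identity, so both maps are isomorphisms; in particular $i_G$ is an isomorphism.

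The main obstacle is identifying the field $k'$ in the standard presentation with the field of definition of $\mathscr{R}_{\mathrm{u},\,\overline{k}}(G_{\overline{k}})$. This is exactly where reducedness enters: in the non-reduced $\mathrm{BC}_n$ case the two differ. For this step I would cite \cite[Proposition~4.1.4, Theorem~4.1.1]{CGP15}, which gives the field of definition as the minimal field over which the root groups descend.
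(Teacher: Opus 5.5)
For (a) and (b) you do what the paper does: they are citations of \cite[Theorem~2.3.10]{CGP15} and \cite[Theorem~5.1.1(1)]{CGP15}. Your reduction to pseudo-simple factors in (b) is unnecessary. Your supplementary remark in (a) only produces a factor of type $\mathrm{C}_n$, not one that is simply connected; that part rests entirely on the citation.

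For (c) the paper cites \cite[Proposition~3.2.7]{CP16}, whereas you argue directly from a standard presentation. This is a legitimate alternative, and it shows the mechanism. After normalizing the presentation, $G\simeq \mathrm{Res}_{k'/k}(G')/Z$, where $k'/k$ is purely inseparable, $G'$ is absolutely simple and simply connected, and $Z\subset \mathrm{Res}_{k'/k}(Z_{G'})$ is central. Simple connectedness of $\overline{G}$ is what kills $Z$.

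Two points in your write-up need correcting. First, reducedness plays no role. A standard group always has a reduced root system over $k^s$, since it equals that of $\mathrm{Res}_{k'/k}(G')$, i.e.\ of $G'$. So that hypothesis in (c) is redundant. Your remark that the two fields differ ``in the $\mathrm{BC}_n$ case'' is vacuous, because such groups are never standard. Likewise, $G^{\mathrm{red}}_{k'}\simeq G'$ follows from simple connectedness alone. (Also, $Z$ need not be of multiplicative type: $\mathrm{Res}_{k'/k}(\mu_p)$ has a large unipotent part. You never use that claim, though.)

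Second, the step you flag as the main obstacle need not be outsourced to a citation whose content you are unsure of; it can even be bypassed.
\begin{itemize}
\item Let $q\colon \mathrm{Res}_{k'/k}(G')_{k'}\to G'$ be the counit; its kernel is smooth, connected and unipotent.
\item The image of $\ker(q)_{\overline{k}}$ in $G_{\overline{k}}$ is a smooth connected unipotent normal subgroup with reductive quotient $G'_{\overline{k}}/q(Z_{\overline{k}})$. Hence it equals $\mathscr{R}_{\mathrm{u},\,\overline{k}}(G_{\overline{k}})$.
\item So $G'_{\overline{k}}\to \overline{G}$ is a central isogeny onto a simply connected group, hence an isomorphism, and $q|_{Z_{k'}}$ is trivial.
\item By adjunction, the inclusion $Z\hookrightarrow \mathrm{Res}_{k'/k}(G')$ corresponds to $q|_{Z_{k'}}$, so $Z=1$ and $G\simeq \mathrm{Res}_{k'/k}(G')$.
\item Now $i_G$ is the comparison map of a Weil restriction. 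It is an isomorphism because the geometric unipotent radical of $\mathrm{Res}_{k'/k}(G')$ is $\ker(q)_{\overline{k}}$, whose field of definition is exactly $k'$.
\end{itemize}
The same fact gives your identification directly. Let $K$ be the field of definition of $\mathscr{R}_{\mathrm{u},\,\overline{k}}(G_{\overline{k}})$. The $K$-homomorphism $\mathrm{Res}_{k'/k}(G')_K\to G_K^{\mathrm{red}}$ has kernel whose $\overline{k}$-fiber is $\ker(q)_{\overline{k}}$, which forces $k'\subseteq K$. The inclusion $K\subseteq k'$ is clear.

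With this patch your proof of (c) is complete, modulo two inputs: CGP's normalization of standard presentations (absolutely simple, simply connected fibers) and the field-of-definition fact for Weil restrictions. These are exactly what the paper's one-line citation packages.
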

\begin{proof} The claim \ref{rs-reduced} on reducedness of the root system of $G_{k^s}$ is \cite[Theorem 2.3.10]{CGP15} and the claim \ref{standard} on standardness of $G$ is \cite[Theorem 5.1.1(1)]{CGP15}. The last claim \ref{comp-isom} is~\cite[Proposition~3.2.7]{CP16}.
\end{proof}  
By Lemma \ref{structure-standard}, when $p>3$, the comparison map of an absolutely pseudo-simple, simply connected $k$-group is an isomorphism. The next subsections briefly review the theory of absolutely pseudo-simple, simply connected groups such that the corresponding comparison maps are not isomorphisms. By Lemma \ref{structure-standard}, these groups arise when the group itself is not standard or the root system over $k^s$ is not reduced. 
\bpp[Exotic and basic exotic groups] A pseudo-reductive $k$-group that is not standard could decompose into a product of a standard $k$-group and an \emph{exotic $k$-group} provided $p=3$ (cf. Lemma \ref{decomposition-lemma-char3}). 
 
Suppose that $p \in \{2,3\}$. Let $G$ be an absolutely simple, simply connected $k$-group with Dynkin diagram having an edge with multiplicity $p$. By \cite[Lemma 7.1.2]{CGP15}, there is a simply connected, absolutely simple $k$-group $\widetilde{G}$ such that the relative Frobenius isogeny $F_{G/k}\colon G\to G^{(p)}$ is factorized uniquely into the composition of two $k$-isogenies $$G\xrightarrow{\pi}\widetilde{G}\to G^{(p)}$$
such that $\pi$ is non-central and has no nontrivial factorization. We call $\pi:G\to \widetilde{G}$ a \emph{very special $k$-isogeny} of $G.$

Assume that $k$ is imperfect and $p\in \{2,3\}$. A smooth $k$-group $G$ is \emph{basic exotic} (cf. \cite[Definition 7.2.6, Proposition 7.3.1]{CGP15}) if there exists a finite field extension $k'/k$ with $k'^p\subset k$, a very special $k'$-isogeny $\pi'\colon G'\to \widetilde{G'}$ and a Levi $k$-subgroup $\overline{G}$ of $\mathrm{Res}_{k'/k}(\widetilde{G'})$ such that $G\simeq f^{-1}(\overline{G}),$ where $f\coloneqq \mathrm{Res}_{k'/k}(\pi').$ By \cite[Lemma 7.2.1]{CGP15}, for a reductive $k'$-group $G',$ a $k$-group $\mathrm{Res}_{k'/k}(G')$ admits a Levi $k$-subgroup if and only if $G'$ is defined over $k.$ By \cite[Lemma 7.2.1, Theorem 7.2.3, Proposition 7.3.1]{CGP15}, every basic exotic $k$-group is absolutely pseudo-simple. Basic exotic $k$-groups are never standard \cite[Proposition 8.1.1]{CGP15}. 

An \emph{exotic $k$-group} is a $k$-group of the form $\mathrm{Res}_{k'/k}(G')$ for a finite reduced $k$-algebra $k'$ and a $k'$-group $G'$ whose fibers are basic exotic. There is a decomposition lemma \cite[Theorem 8.2.10]{CGP15} for pseudo-reductive groups over fields of characteristic $3$. 
\epp 
\begin{lemma}\label{decomposition-lemma-char3} For $p=3$ and every pseudo-reductive $k$-group $G$, there is a unique decomposition  $$G=G_1\times G_2$$ such that $G_1$ is standard and $G_2$ is either trivial or exotic.  
\end{lemma}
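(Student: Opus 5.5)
The lemma is \cite[Theorem 8.2.10]{CGP15}, so the cleanest route is to cite it. If I were to reconstruct it, I would argue as follows.

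\emph{Reduction to the perfect, absolutely pseudo-simple pieces.} Since $p=3$, Lemma \ref{structure-standard}\ref{rs-reduced} shows the root system of $G_{k^s}$ is reduced. Let $\mathscr{D}(G)$ be the derived group. Then $G$ is generated by $\mathscr{D}(G)$ and $Z_G(T)$ for a maximal $k$-torus $T$, and $\mathscr{D}(G)$ is the product of its pseudo-simple normal $k$-subgroups $N_i$ (\cite[Proposition 3.1.8]{CGP15}). Each $N_i$ is $\mathrm{Res}_{k_i/k}$ of an absolutely pseudo-simple $k_i$-group $N_i'$ for a finite separable extension $k_i/k$, obtained by Galois descent over $k^s$.

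\emph{Sorting the factors and forming $G_1,G_2$.} By Lemma \ref{structure-standard}\ref{standard}, the only possible non-standard factors are those whose $\overline{G}$-type is $\mathrm{G}_2$, the only Dynkin diagram with an edge of multiplicity $3$. For such a factor $N_i'$ I would use the classification of absolutely pseudo-simple groups in characteristic $3$ with $\overline{k}$-type $\mathrm{G}_2$. Namely, $N_i'$ is either standard or basic exotic, according to whether the field of definition of the geometric unipotent radical of the long-root subgroups agrees with that of the short-root subgroups (\cite[Theorem 8.1.1 and Section 7]{CGP15}). I would then let $G_2$ be the product of the $\mathrm{Res}_{k_i/k}(N_i')$ with $N_i'$ basic exotic; this is exotic by definition. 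I would let $G_1$ be the subgroup generated by the remaining $N_j$ together with the Cartan subgroup. Since basic exotic groups have trivial center and are of minimal type with $Z_{N}(T_N)$ pseudo-reductive, the Cartan subgroup of $G$ splits off as $Z_{G_1}(T_1)\times Z_{G_2}(T_2)$. That splitting makes the multiplication map $G_1\times G_2\to G$ an isomorphism: it is surjective by generation, the factors commute as distinct normal factors, and the intersection is central in both and hence trivial, because $Z(G_2)=1$. Finally, $G_1$ should be standard by the standardness criterion \cite[Theorem 5.1.1]{CGP15}, applied factorwise, together with the stability of standardness under the central products arising here.

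\emph{Main obstacle and uniqueness.} The hard step is the splitting of the Cartan part. In general a pseudo-reductive group is only a quotient of a product of its derived factors and a Cartan subgroup, so I must show that the exotic factors have no interaction with the commutative part. This is exactly where one needs the automorphism scheme of a basic exotic group to be the group itself, i.e.\ that it has no outer torus action (\cite[Proposition 8.1.1]{CGP15}). Uniqueness follows because $G_2$ is forced to be the product of all normal pseudo-simple factors that are non-standard. Any decomposition must place these in the exotic part, since basic exotic groups are not standard and standardness passes to such direct factors. It also cannot place any standard pseudo-simple factor there, since every fiber of an exotic group is basic exotic.
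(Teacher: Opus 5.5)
Your proposal is correct and matches the paper: the paper gives no argument of its own for this lemma and simply cites \cite[Theorem 8.2.10]{CGP15}, which is exactly your first step. Your optional reconstruction is not needed and is imprecise in places, so do not offer it as a proof: a non-standard absolutely pseudo-simple factor of type $\mathrm{G}_2$ is in general $\mathrm{Res}_{K/k_i}$ of a basic exotic $K$-group for a purely inseparable $K/k_i$, not basic exotic itself, and a pseudo-simple normal $k$-subgroup need not be a Weil restriction of an absolutely pseudo-simple group (e.g.\ $\mathrm{Res}_{k'/k}(\mathrm{SL}_2)/\mu_2$ with diagonal $\mu_2$ and $k'/k$ separable quadratic).
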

When $k'=k^{1/p}$ and $k$ is of finite degree of imperfection, the following result \cite[Proposition 7.3.3(1)]{CGP15} reduces the study of torsors under basic exotic $k$-groups to that of semisimple, simply connected $k$-groups.
\begin{lemma}\label{torsor-basic-exotic} Let $k$ be an imperfect field of finite degree of imperfection and $p\in \{2,3\}.$ Let $G$ be a basic exotic $k$-group arising from $(G',k',\overline{G})$ such that  $k'=k^{1/p}.$ Then the natural map 
$$\mathrm{H}^1(k,G)\to \mathrm{H}^1(k,\overline{G})$$
is bijective.
\end{lemma}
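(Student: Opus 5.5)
The map in the statement is the one induced on cohomology by $f\colon G\to\overline{G}$, the restriction of $\mathrm{Res}_{k'/k}(\pi')$ to $G=f^{-1}(\overline{G})$. The plan is to present $G$ as an extension $1\to N\to G\to\overline{G}\to 1$ with $N\coloneqq\ker(f|_G)$, and to read off bijectivity from the long exact sequence in nonabelian cohomology together with twisting.

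\textbf{Step 1: the extension.} Since $\pi'$ is an isogeny, $f=\mathrm{Res}_{k'/k}(\pi')$ has kernel $\mathrm{Res}_{k'/k}(\ker\pi')$. I would first check that $f|_G\colon G\to\overline{G}$ is surjective and smooth; this should come from the construction of basic exotic groups in \cite[\S7.2--7.3]{CGP15}. The key input is $k'=k^{1/p}$: then the $p$-power map makes $\mathrm{Res}_{k'/k}(\widetilde{G'})$ look like a Frobenius twist, and $\overline{G}$ lifts through $f$. Granting this, fppf cohomology gives an exact sequence of pointed sets
$$\mathrm{H}^1(k,N)\to\mathrm{H}^1(k,G)\to\mathrm{H}^1(k,\overline{G})\to\mathrm{H}^2(k,N),$$
where the last term is interpreted via nonabelian $\mathrm{H}^2$ (gerbes) if $N$ is not central. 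Because $G$ and $\overline{G}$ are smooth, their fppf $\mathrm{H}^1$ agrees with Galois cohomology.

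\textbf{Step 2: identify $N$.} $N$ is the $k$-group scheme $f^{-1}(1)\cap G$, an infinitesimal-type subgroup of $\mathrm{Res}_{k'/k}(\ker\pi')$. The aim is to show that $N$ and all its inner twists have trivial $\mathrm{H}^1$, and that the obstruction class in $\mathrm{H}^2$ vanishes. I expect $N$ to be unipotent, essentially a vector group $\mathrm{Res}_{k'/k}(\ker\pi')\cap G$, built from root groups for the short roots, so $\mathrm{H}^1$ of it and of its twists vanishes. The decisive point is $k'=k^{1/p}$: for a finite purely inseparable extension, $\mathrm{H}^1(k,\mathrm{Res}_{k'/k}(\mu_p))=k'^\times/k'^{\times p}$ is in general nonzero, but when $k'=k^{1/p}$ one has $k'^{p}=k$, so every element of $k'$ is a $p$-th power up to $k^\times$. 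This kind of divisibility is what kills these cohomology groups. I would carry out this computation torus by torus on a maximal torus $T$ of $G$ and then extend to root groups by a big-cell argument.

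\textbf{Step 3: surjectivity and injectivity.} For surjectivity I would use the unique $k$-isogeny $\widetilde{G'}\to G'^{(p)}$, and its restriction of scalars, to construct a section of $f|_G$ on the level of $\mathrm{H}^1$, or at least a map $\overline{G}\to G^{(p)}$-type composite such that the composite on $\mathrm{H}^1$ is a bijection. Frobenius twists induce bijections on $\mathrm{H}^1$ because $k^{1/p}$-descent is trivial for étale cohomology. Injectivity follows by twisting: the fibre over a class $[P]$ is a quotient of $\mathrm{H}^1(k,{}_PN)$, which vanishes by Step 2.

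\textbf{Main obstacle.} The hardest part is Step 2 together with the surjectivity in Step 3, namely controlling the non-smooth kernel $N$ and its twists. I would first try the alternative, more robust route: realise $\overline{G}\to G\to\overline{G}^{(p)}$, both composites being Frobenius, and use that Frobenius induces a bijection on Galois $\mathrm{H}^1$ of smooth groups. This gives bijectivity formally if the composite $G\to\overline{G}^{(p)}$ also induces an injection, which I would deduce from the structure theory in \cite[Proposition~7.3.3]{CGP15}.
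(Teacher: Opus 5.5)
Your proposal has genuine gaps: the steps that carry the weight are either wrong or left open. First, $f|_G$ is not smooth. Because $G=f^{-1}(\overline{G})$, its kernel is all of $N=\ker f=\mathrm{Res}_{k'/k}(\ker\pi')$. This $N$ is not smooth, since $N(k^s)=\ker\pi'(k'\otimes_k k^s)=1$ while $\mathrm{Lie}(N)\neq 0$. Only fppf surjectivity is needed for your exact sequence, so this point is minor. More seriously, $N$ is not a vector group. It contains $\mathrm{Res}_{k'/k}$ of the multiplicative kernel $\ker(\pi'|_{T'})$, and non-smooth unipotent pieces such as $\mathrm{Res}_{k'/k}(\alpha_p)$. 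Unipotence does not kill $\mathrm{H}^1$ of non-smooth groups: $\mathrm{H}^1(k,\alpha_p)=k/k^p$. Your computation for the multiplicative part is also wrong. The fppf sequence $1\to\mathrm{Res}_{k'/k}\mu_p\to\mathrm{Res}_{k'/k}\mathbb{G}_m\xrightarrow{x\mapsto x^p}\mathbb{G}_m\to 1$ (the $p$-th power lands in $\mathbb{G}_m$ because $k'^p\subseteq k$) gives $\mathrm{H}^1(k,\mathrm{Res}_{k'/k}\mu_p)=k^\times/k'^{\times p}$, which does vanish for $k'=k^{1/p}$. With your formula you would get $k'^\times/k'^{\times p}=k'^\times/k^\times\neq 1$, the opposite of what you want. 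Even granting $\mathrm{H}^1(k,N)=1$, you would need the same for every twist ${}_PN$. For surjectivity you never control the class in nonabelian $\mathrm{H}^2(k,N)$. Your fallback route has three problems. It needs a homomorphism $\overline{G}\to G$ that the construction does not supply: the complementary isogeny $\widetilde{G'}\to G'^{(p)}$ leads into $\mathrm{Res}_{k'/k}(G'^{(p)})$, not into $G\subseteq\mathrm{Res}_{k'/k}(G')$. It relies on an unproved claim that Frobenius is bijective on $\mathrm{H}^1$ of smooth groups. And it defers the needed injectivity to \cite[Proposition~7.3.3]{CGP15}, which is the statement at hand; the paper's own proof consists of exactly that citation.

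The idea you are missing is that the hypothesis $k'=k^{1/p}$ lets you work entirely with separable points. Since $k^s/k$ is separable, $k'\otimes_k k^s=(k^s)^{1/p}=:K$ is a field. Then $f$ induces a $\mathrm{Gal}(k^s/k)$-equivariant isomorphism of abstract groups $G(k^s)\to\overline{G}(k^s)$.
\begin{itemize}
\item Injectivity is just $N(k^s)=\ker\pi'(K)=1$.
\item For surjectivity, $\overline{G}(k^s)$ is generated by $\overline{T}(k^s)$ and the root groups $\overline{U}_c(k^s)$ for a split maximal torus $\overline{T}$. Lifting along $f$ only requires extracting $p$-th roots in $K$ of elements of $k^s$, which is always possible because $K^p=k^s$.
\item On root groups of long roots of $G'$, the map $f$ is an isomorphism.
\item On root groups of short roots (which are $\mathrm{Res}_{K/k^s}(\mathbb{G}_a)$ in $G_{k^s}$), $f$ is, in suitable coordinates, $x\mapsto x^p$ from $K$ onto $k^s$.
\item On the torus, for the maximal torus $T'\subseteq G'_K$ lying over $\overline{T}_K$, the cokernel of $T'(K)\to\widetilde{T'}(K)$ embeds in a product of copies of $K^\times/K^{\times p}=K^\times/(k^s)^\times$. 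Every point of $\overline{T}(k^s)$ has trivial image there.
\end{itemize}
Since $G$ and $\overline{G}$ are smooth, $\mathrm{H}^1(k,-)$ is the Galois cohomology of $k^s$-points, so $\mathrm{H}^1(k,G)\to\mathrm{H}^1(k,\overline{G})$ is bijective. No fppf cohomology of $N$ and no gerbes are needed. A posteriori $\mathrm{H}^1(k,N)=1$, so your Step~2 target is true, but it is a consequence of this argument rather than an input to it.
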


\bpp[Totally non-reduced and basic non-reduced groups]\label{basic-non-reduced} A pseudo-reductive $k$-group $G$ is \emph{totally non-reduced} if $G$ is perfect and every irreducible component of a root system of $G_{k^s}$ is not reduced. A \emph{basic non-reduced} $k$-group is an absolutely pseudo-simple $k$-group such that the root system of $G_{k^s}$ is not reduced and the field of definition of $\mathscr{R}_{\mathrm{u},\,\overline{k}}(G_{\overline{k}})$ is quadratic over $k.$ By \ref{structure-standard}\ref{rs-reduced}, these groups occur only when $k$ is imperfect and $p=2.$

Suppose from now that $p=2$ and $[k:k^2]=2.$ By \cite[Proposition 10.1.4]{CP16}, every totally non-reduced $k$-group $G$ is isomorphic to $\mathrm{Res}_{k'/k}(G')$ for a finite reduced $k$-algebra $k'$ and a $k'$-group $G'$ whose fibers are basic non-reduced, which is analogous to the concept of exotic and basic-exotic groups. We also have the decomposition lemma \cite[Proposition 10.1.6]{CGP15} for pseudo-reductive groups that has non-reduced root system over $k^s.$ 
\epp 
\begin{lemma}\label{decomposition-lemma-char2} For $p=2,$ $[k:k^2]=2$ and a pseudo-reductive $k$-group $G$ such that the root system of $G_{k^s}$ is not reduced, there is a unique decomposition $$G=G_1\times G_2$$ such that $G_1$ is totally non-reduced and $(G_2)_{k^s}$ has a reduced root system.
\end{lemma}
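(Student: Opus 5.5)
The plan is to build the decomposition by hand from the normal structure of the derived group, and then show the splitting using rigidity properties of totally non-reduced groups over $k$ with $[k:k^2]=2$. (In the literature this is \cite[Proposition~10.1.6]{CGP15}; we follow the same line.)

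\textbf{Step 1: the candidate factor $G_1$.} Let $\mathscr{D}(G)$ be the derived group, which is pseudo-semisimple. It is generated by its finitely many minimal nontrivial connected, smooth, normal $k$-subgroups $G_i$. These pairwise commute and are pseudo-simple. Passing to $k^s$, the root system of $G_{k^s}$ with respect to a maximal torus $T$ is the disjoint union of the root systems of the $(G_i)_{k^s}$. Each such root system is irreducible up to Galois permutation. By the discussion in \ref{basic-non-reduced} and \cite[Proposition~10.1.4]{CP16}, each $G_i$ is either totally non-reduced, i.e.\ $\mathrm{Res}_{k_i/k}$ of a basic non-reduced group, or has a reduced root system over $k^s$. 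We set $G_1$ to be the subgroup generated by the totally non-reduced $G_i$. It is normal in $G$ and totally non-reduced, and it is nontrivial by the hypothesis that the root system of $G_{k^s}$ is not reduced.

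\textbf{Step 2: the complement and the splitting.} We set $G_2\coloneqq Z_G(G_1)$, the scheme-theoretic centralizer, which is normal in $G$. To get $G=G_1\times G_2$ it suffices to prove two things.
\begin{itemize}
\item $Z_{G_1}=1$, which gives $G_1\cap G_2=1$.
\item The conjugation action $G\to \mathrm{Aut}_{G_1}$ lands in the image of $G_1$. Then every $g\in G$ factors as $g=g_1\cdot(g_1^{-1}g)$ with $g_1^{-1}g\in G_2$.
\end{itemize}
For both statements we reduce, via the restriction of scalars description, to a basic non-reduced $k'$-group over a field with $[k':k'^2]=2$. There we use the explicit description of basic non-reduced groups in \cite{CP16}. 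Their Cartan subgroups are determined by the root datum, their centers are trivial, and, because $[k':k'^2]=2$, every automorphism preserving $T$ acts on the root groups through the Cartan subgroup, so it is inner.

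Once the product decomposition holds, $G_2$ is connected, smooth and pseudo-reductive as a direct factor. Its root system over $k^s$ is the union of the remaining reduced components, so $(G_2)_{k^s}$ has a reduced root system.

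\textbf{Step 3: uniqueness.} Given any decomposition $G=H_1\times H_2$ of the stated kind, $H_1$ is the subgroup generated by the minimal normal subgroups whose root systems are non-reduced. This forces $H_1=G_1$. Then $H_2=Z_G(H_1)=G_2$, using again $Z_{G_1}=1$.

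\textbf{Main obstacle.} The main obstacle is Step 2, and specifically the claim that conjugation by all of $G$, including its Cartan part $Z_G(T)$ lying outside $\mathscr{D}(G)$, acts on $G_1$ by inner automorphisms. I would approach it through the Cartan subgroup $C=Z_G(T)$. I would show that the image of $C$ in the automorphism functor of $G_1$ coincides with the image of $C_1=Z_{G_1}(T\cap G_1)$. This should use the rigidity of basic non-reduced groups when the degree of imperfection is $1$. I expect this rigidity to be exactly where the hypothesis $[k:k^2]=2$ enters.
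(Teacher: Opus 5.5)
You have a genuine gap: the step that carries the content of the lemma is asserted, not proved. The surrounding framework is fine. This covers Step 1, the reduction in Step 2 of the splitting $G=G_1\times Z_G(G_1)$ to two conditions, and Step 3 once those conditions hold. The two conditions are $Z_{G_1}=1$ and ``conjugation by $G$ on $G_1$ factors through $G_1$'', as functors on $k$-algebras. Since $G=C\cdot\mathscr{D}(G)$ with $C=Z_G(T)$, and the pseudo-simple factors of $\mathscr{D}(G)$ with reduced root system commute with $G_1$, the second condition amounts to $C$ acting on $G_1$ through $C_1=Z_{G_1}(T\cap G_1)$.

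That statement is never established. In Step 2 it is simply asserted, together with $Z_{G_1}=1$: ``because $[k':k'^2]=2$, every automorphism preserving $T$ acts on the root groups through the Cartan subgroup, so it is inner''. In your last paragraph the same statement reappears as the unresolved main obstacle, with only an expectation of how it might be proved. Apart from the Weil-restriction description of totally non-reduced groups, nothing else in your argument uses $[k:k^2]=2$. So this rigidity of basic non-reduced groups is the actual content of the lemma, and as written you have a reduction, not a proof.

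The inference itself is also not formal, in two ways. First, knowing that the action on each root group agrees with that of some point of the Cartan subgroup does not make the automorphism inner. You need a single point of $C_1(R)$, functorial in the $k$-algebra $R$, realizing the action on all root groups $U_{(\pm a)}$ simultaneously. Second, passing from $G_1=\mathrm{Res}_{k'/k}(G')$ to $G'$ requires showing that the conjugation action of $G$ respects the $k'$-structure. This is not automatic when factor fields of $k'$ are inseparable over $k$.

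To close the gap, you have two options. You can derive from the explicit structure of basic non-reduced groups over fields of degree of imperfection $1$ that they have trivial center and that every automorphism centralizing a maximal torus comes from the Cartan subgroup. Or you can cite precise results giving these facts. The paper does neither; its proof of the lemma is just the citation \cite[Proposition~10.1.6]{CGP15}, which you also name. Quoting that result directly is the simplest way to make your write-up complete.
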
 
We conclude with the vanishing lemma \cite[Proposition 9.9.4]{CGP15} of $\mathrm{H}^1(k,G)$.
\begin{lemma}\label{non-reduced-vanish}
For $p=2,$ $[k:k^2]=2$ and an absolutely pseudo-simple $k$-group $G$ such that the root system of $G_{k^s}$ is not reduced, 
$$\mathrm{H}^1(k,G)=\{\ast\}.$$
\end{lemma}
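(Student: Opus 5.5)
This statement is quoted directly from \cite[Proposition 9.9.4]{CGP15}, so it can simply be cited. If I were to prove it myself, I would first reduce to the basic non-reduced case. Since $G$ is absolutely pseudo-simple and the root system of $G_{k^s}$ is non-reduced, \ref{structure-standard}\ref{rs-reduced} shows that $\overline{G}=G_{\overline{k}}^{\mathrm{red}}$ is simple, simply connected of type $\mathrm{C}_n$, and $G_{k^s}$ has root system $\mathrm{BC}_n$. Let $K$ be the field of definition of $\mathscr{R}_{\mathrm{u},\,\overline{k}}(G_{\overline{k}})$. Because $[k:k^2]=2$, we have $k^{1/2}/k$ quadratic, and $K$ is a nontrivial purely inseparable extension inside $k^{1/2}$; otherwise $G$ would be reductive with a reduced root system. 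Hence $K=k^{1/2}$ and $G$ is basic non-reduced. The plan is then to use the explicit structure of such groups from \cite[\S 9.6--9.9]{CGP15}. There is a surjective homomorphism $\xi\colon G\to \overline{G}_k$ onto a Levi-type quotient, where $\overline{G}_k\simeq \mathrm{Sp}_{2n}$ over $k$. Its kernel $U$ is a smooth, connected, commutative, $k$-wound unipotent group, a vector group quotient coming from the short root groups.

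Next I would run the long exact sequence in nonabelian cohomology,
\begin{equation*}
\mathrm{H}^1(k,U)\to \mathrm{H}^1(k,G)\to \mathrm{H}^1(k,\overline{G}_k).
\end{equation*}
The right-hand term vanishes, because $\mathrm{H}^1(k,\mathrm{Sp}_{2n})=\{\ast\}$ for every field. By twisting, every class in $\mathrm{H}^1(k,G)$ therefore comes from $\mathrm{H}^1(k,{}_{c}U)$ for cocycles $c$ in the trivial class, so it suffices to treat $U$ itself and its twists by $G(k)$.

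The main obstacle is showing that these classes die in $\mathrm{H}^1(k,G)$. In general $\mathrm{H}^1(k,U)$ need not vanish, since $U$ is wound. For this I would use a more concrete model. The standard description of a basic non-reduced group realizes $G$ as the subgroup of $\mathrm{Res}_{k^{1/2}/k}(\mathrm{Sp}_{2n})$ cut out by a $k$-subspace $V\subset k^{1/2}$ in the short-root directions. Under this model, $U$ is a quotient of the form $\mathrm{Res}_{k^{1/2}/k}(\mathbb{G}_a^{2n})/V^{2n}$. When $[k:k^2]=2$ one can take $V=k$, and then $U$ becomes a quotient of a vector group by a vector subgroup. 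Such a group is again a vector group over $k$, so $\mathrm{H}^1(k,U)=0$, and the same argument applies to every twist. If the map $\xi$ or the structure of $U$ turns out to be subtler than this, I would fall back on citing \cite[Proposition~9.9.4]{CGP15} directly, as the paper does.
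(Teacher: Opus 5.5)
Citing \cite[Proposition~9.9.4]{CGP15} is exactly what the paper does, and only that citation proves the lemma. The self-contained argument you sketch does not work, because its central step is false.

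There is no surjection $\xi\colon G\to\mathrm{Sp}_{2n}$ of $k$-groups whose kernel $U$ is smooth, connected and unipotent. Such a $U$ would be a smooth, connected, unipotent, normal $k$-subgroup of $G$; being $k$-wound does not exempt it. Hence $U\subset\mathscr{R}_{\mathrm{u},\,k}(G)=1$, and then $G\simeq\mathrm{Sp}_{2n}$ would have the reduced root system $\mathrm{C}_n$. The unipotent radical of $G$ only appears after extending scalars to the field of definition $K$. So the exact sequence $\mathrm{H}^1(k,U)\to\mathrm{H}^1(k,G)\to\mathrm{H}^1(k,\mathrm{Sp}_{2n})$ on which your argument rests is not available. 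Your sketch is also internally inconsistent: a nonzero vector group is not $k$-wound.

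The concrete model fails too. Since $G_{\overline{k}}^{\mathrm{red}}\simeq\mathrm{Sp}_{2n}$, the group $G$ has rank $n$. A copy of $G$ inside $\mathrm{Res}_{k^{1/2}/k}(\mathrm{Sp}_{2n})$ would therefore contain a maximal $k$-torus $T$ of the ambient group. The roots of $G_{k^s}$ with respect to $T$ would then lie among those of the ambient group, i.e.\ in $\mathrm{C}_n$. Shrinking root groups by a subspace $V$ can never produce the roots $\pm e_i$ of $\mathrm{BC}_n$. In fact $i_G$ is never injective for such $G$, since the weight spaces of $\mathrm{Lie}(G)$ for the multipliable roots must die in $\mathrm{Lie}(G_K^{\mathrm{red}})$.

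Your reduction to $K=k^{1/2}$ is also unjustified. Take $G'$ basic non-reduced over $k^{1/2}$. Then $\mathrm{Res}_{k^{1/2}/k}(G')$ is absolutely pseudo-simple with root system $\mathrm{BC}_n$ over $k^s$, yet its geometric unipotent radical is not defined over $k^{1/2}$: otherwise its quotient $G'$ would be reductive.

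The correct reduction is as follows. Write $G\simeq\mathrm{Res}_{k'/k}(G')$ with $G'$ basic non-reduced (cf.~\ref{basic-non-reduced}), then apply Shapiro's lemma, noting that $[k':k'^2]=2$ still holds. Even after that, the vanishing for basic non-reduced groups genuinely needs the structure theory of \cite[\S9.9]{CGP15}, which your sketch does not replace.
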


\section{Structure of pseudo-semisimple, simply connected groups}
Throughout this section, $k$ is a field and $p\coloneqq \mathrm{char}(k)$.

The common strategy to study semisimple, simply connected is to use its decomposition into simple factors. We adapt this strategy to ``pseudo-reductive'' setting. 
\begin{lemma}\label{decomp-thm} Suppose that $[k:k^2]\le 2$ when $p=2$. For every pseudo-semisimple, simply connected $k$-group $G$, there is an isomorphism of $k$-groups 
$$G\simeq \prod_{i=1}^n G_i$$ such that each $G_i$ is pseudo-simple, simply connected. Moreover, the $k$-groups $G_i$'s are unique up to permutation and isomorphism. 
\end{lemma}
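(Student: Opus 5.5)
We prove Lemma \ref{decomp-thm} by combining the structure of pseudo-semisimple groups with the minimal-type property, following the overview.

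\textbf{Step 1: the multiplication map.} By \cite[Proposition~3.1.8]{CGP15}, the pseudo-semisimple $k$-group $G$ has finitely many minimal nontrivial perfect smooth connected normal $k$-subgroups $G_1,\dots,G_n$. Each $G_i$ is pseudo-simple, they pairwise commute, and the multiplication map
$$\mu\colon \prod_{i=1}^n G_i\to G$$
is a surjective central quotient map. Its kernel is the central subgroup cut out by the pairwise intersections, and $\mu$ induces a bijection on the sets of pseudo-simple normal subgroups.

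\textbf{Step 2: injectivity of $\mu$.} This is where the hypothesis $[k:k^2]\le 2$ for $p=2$ enters. By \cite[Remark~4.3.2, Proposition~5.3.3]{CP16}, recalled in \ref{comp-map}, $G$ is of minimal type in this range. For central extensions $1\to Z\to E\to G\to 1$ with $E$ pseudo-reductive, the question is whether $Z$ embeds into $\ker(i_E)$ or into the center of $\overline{G}$ over $\overline{k}$. Passing to $\overline{k}$ modulo geometric unipotent radicals, $\mu$ induces a central isogeny
$$\prod_i \bigl((G_i)_{\overline{k}}\bigr)^{\mathrm{red}}\to \overline{G}\coloneqq G_{\overline{k}}^{\mathrm{red}},$$
and $\overline{G}$ is semisimple and simply connected by hypothesis. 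This geometric isogeny is therefore an isomorphism. Hence $(\ker\mu)_{\overline{k}}$ lies in the product of the geometric unipotent radicals, so $\ker\mu\cap\prod_i Z_{G_i}(T_i)\subset\ker(i_{\prod G_i})$. Because $\ker\mu$ is central, it lies in $Z_{\prod G_i}(\prod T_i)$ for maximal $k$-tori $T_i$. Minimal type of each $G_i$ then forces $\ker\mu=1$.

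Each $G_i$ is a normal subgroup of the minimal-type group $G$ and is of minimal type by \cite[Lemma~9.4.3]{CGP15}. Alternatively, since $G_i$ is itself pseudo-semisimple and simply connected, the same minimal-type result from \cite{CP16} applies to it directly. Simply connectedness of each $G_i$ follows because $(G_i)_{\overline{k}}^{\mathrm{red}}$ is a direct factor of the simply connected semisimple group $\overline{G}$. For a normal subgroup $N\trianglelefteq G$, the image of $N_{\overline{k}}$ in $\overline{G}$ is normal and perfect, hence a product of simple factors, and that image equals $N_{\overline{k}}^{\mathrm{red}}$.

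The main obstacle I anticipate is exactly this step: making rigorous that a central kernel invisible on reductive quotients must lie in $\ker(i)\cap Z(T)$. Here I would be careful that the kernel is possibly non-smooth. My first approach is to check centrality scheme-theoretically and use that $Z_G(T)$ contains the scheme-theoretic center of $G$. If $p=2$ with $[k:k^2]=2$ and some factor is totally non-reduced, I would instead first split off that factor using Lemma \ref{decomposition-lemma-char2}.

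\textbf{Step 3: uniqueness.} Given another decomposition $G\simeq\prod_j H_j$ into pseudo-simple factors, each $H_j$ is a minimal perfect normal subgroup. A normal pseudo-simple subgroup of a product projects onto a factor, and by perfectness it must equal that factor. The two families therefore coincide up to permutation.
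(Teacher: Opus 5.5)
Your proof is correct, but it takes a more self-contained route than the paper. The paper only notes that $G$ is of minimal type and then quotes \cite[Proposition 5.3.4]{CP16} for the decomposition. You instead start from the central quotient $\mu\colon\prod_i G_i\to G$ of \cite[Proposition~3.1.8]{CGP15} and prove $\ker\mu=1$ directly, which amounts to reproving the decomposition the paper outsources.

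Your version makes the role of each hypothesis visible:
\begin{itemize}
\item simple connectivity of $\overline{G}$ forces $\overline{\mu}$ to be an isomorphism;
\item hence $\ker\mu\subseteq\ker(i_{\prod G_i})=\prod_i\ker(i_{G_i})$ outright, with no intersection needed, because $\ker(i_G)$ is the functor of points whose base change to $\overline{k}$ lands in $\mathscr{R}_{\mathrm{u},\,\overline{k}}(G_{\overline{k}})$;
\item centrality puts $\ker\mu$ inside $\prod_i Z_{G_i}(T_i)$;
\item minimal type of the factors then kills it.
\end{itemize}

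Two steps you only assert deserve a line each.

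First, the centrality of $\overline{\mu}$. Its kernel is an extension of a quotient of $\mathscr{R}_{\mathrm{u},\,\overline{k}}(G_{\overline{k}})$ by the image of $(\ker\mu)_{\overline{k}}$. That image is central, hence finite, in the semisimple group $\prod_i (G_i)^{\mathrm{red}}_{\overline{k}}$. A normal subgroup scheme of a semisimple group of this shape is finite. So the smooth connected unipotent quotient is trivial, and the kernel equals the central part.

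Second, the minimal-type input is needed for the $G_i$, not for $G$. Your second justification, applying the minimal-type result recalled in \ref{comp-map} to each $G_i$, is the right one. It is not circular: simple connectivity of each $G_i$ follows from $\overline{\mu}$ being an isomorphism, which does not use minimal type. Your aside about images of arbitrary normal subgroups $N$ is not needed and would require separate justification. Likewise the hedge about splitting off totally non-reduced factors is unnecessary, since the minimal-type result already covers $[k:k^2]=2$.

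What your route buys is transparency, plus uniqueness of the factors as actual subgroups of $G$ via your commutator/minimality argument. The paper's route buys brevity.
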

\begin{proof} Every pseudo-semisimple, simply connected $k$-group $G$ is of minimal type (cf.~\ref{comp-map}). The decomposition is then followed by \cite[Proposition 5.3.4]{CP16}. 
\end{proof}
\begin{cor}\label{pseudo-simply-connected}
Suppose that $[k:k^2]\le 2$ when $p=2.$ For every pseudo-semisimple, simply connected $k$-group $G$, there is a finite \'etale $k$-algebra $k'$ and an affine, smooth $k'$-group $G'$ whose fiber over each factor field of $k'$ is absolutely pseudo-simple, simply connected such that there is an isomorphism of $k$-groups 
$$G\simeq \mathrm{Res}_{k'/k}(G').$$  
\end{cor}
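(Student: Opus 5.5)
We start from Lemma \ref{decomp-thm}, which writes $G\simeq \prod_{i=1}^n G_i$ with each $G_i$ pseudo-simple and simply connected. Since the restriction of scalars along a product of finite étale algebras is the product of the restrictions, that is $\mathrm{Res}_{\prod_i k_i'/k}(\coprod_i G_i')=\prod_i \mathrm{Res}_{k_i'/k}(G_i')$, it suffices to treat a single pseudo-simple, simply connected $k$-group $H\coloneqq G_i$. For such an $H$ we want a finite separable field extension $k'/k$ and an absolutely pseudo-simple, simply connected $k'$-group $H'$ with $H\simeq \mathrm{Res}_{k'/k}(H')$. Taking $k'$ to be the product of these fields and $G'$ the disjoint union of the corresponding $H'$ then gives the corollary.

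To handle a pseudo-simple $H$, pass to $k^s$. The group $H_{k^s}$ is again pseudo-semisimple and simply connected: the geometric reductive quotient is unchanged, and pseudo-reductivity and perfectness are preserved under separable extension. We apply Lemma \ref{decomp-thm} over $k^s$, which is legitimate because $[k^s:(k^s)^2]=[k:k^2]$. This gives a decomposition $H_{k^s}\simeq \prod_{j\in J} H_j$ into pseudo-simple factors, and these are absolutely pseudo-simple since $k^s$ is separably closed. By the uniqueness part of that lemma, the factors $H_j$, viewed as the minimal nontrivial connected smooth normal subgroups of $H_{k^s}$, are canonical. Hence $\mathrm{Gal}(k^s/k)$ permutes the finite set $J$. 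Pseudo-simplicity of $H$ means this action is transitive, since the product over an orbit descends to a nontrivial connected smooth normal $k$-subgroup. Fix $j_0\in J$ and let $k'\subset k^s$ be the fixed field of its stabilizer, so that $[k':k]=|J|$ is finite and separable. The factor $H_{j_0}$ is stable under $\mathrm{Gal}(k^s/k')$, so by Galois descent it descends to a normal $k'$-subgroup $H'\subset H_{k'}$. This $H'$ is absolutely pseudo-simple and simply connected, the latter because $H'_{\overline{k}}$ is a direct factor of $H_{\overline{k}}$, so its reductive quotient is a direct factor of a simply connected semisimple group.

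Next, form the map $H\to \mathrm{Res}_{k'/k}(H_{k'})\to \mathrm{Res}_{k'/k}(H_{k'}/H'')$, where $H''$ is the product of the remaining factors defined over $k'$ (it is Galois stable over $k'$, so it descends), so that $H_{k'}/H''\simeq H'$. Over $k^s$ we have $\mathrm{Res}_{k'/k}(H')_{k^s}\simeq \prod_{\sigma\in \mathrm{Hom}_k(k',k^s)} {}^\sigma H'_{k^s}=\prod_{j\in J}H_j$, and the map becomes the identity $\prod_j H_j\to\prod_j H_j$ because the embeddings $\sigma$ correspond bijectively to $J$. This is the standard argument for semisimple groups (cf.\ Borel–Tits). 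By Galois descent we conclude that $H\simeq\mathrm{Res}_{k'/k}(H')$.

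The main obstacle is the Galois-equivariance and canonicity of the factor decomposition over $k^s$. Lemma \ref{decomp-thm} asserts uniqueness only up to permutation and isomorphism, but the descent argument needs the factors $H_j$ to be unique as subgroups. I would obtain this from the structure behind \cite[Proposition 5.3.4]{CP16}: the $H_j$ are exactly the pseudo-simple normal subgroups of $H_{k^s}$, and two of them either coincide or commute with trivial intersection (minimal type). I would also check carefully that the product map over $k^s$ is an isomorphism rather than merely surjective with central kernel, which again follows from the product decomposition of Lemma \ref{decomp-thm} itself.
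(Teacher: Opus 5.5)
Your proposal is correct and follows essentially the same Galois-descent argument as the paper: decompose over $k^s$ via Lemma~\ref{decomp-thm}, let $\mathrm{Gal}(k^s/k)$ permute the factors, and identify each orbit with $\mathrm{Res}_{k'/k}$ of the factor descended to the fixed field of its stabilizer. Your preliminary reduction to a pseudo-simple $H$ over $k$ is harmless but redundant, since grouping the $k^s$-factors by orbits already does it. Your explicit comparison map $H\to\mathrm{Res}_{k'/k}(H_{k'}/H'')$, and your point that the factors must be unique as subgroups rather than only up to isomorphism, supply details the paper's proof leaves implicit.
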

This follows from a standard Galois descent argument. However, for the sake of completion, we give a complete proof here.  
\begin{proof} Applying Lemma \ref{decomp-thm} for $G_{k^s},$ there are pseudo-simple, simply connected $k^s$-groups $G_i$ such that 
$$G_{k^s}\simeq \prod_{i=1}^n G_i.$$
The Galois group $ \mathrm{Gal}(k^s/k)$ acts on the set $\{G_i:1\le i\le n\}$ by the uniqueness of the decomposition. For $1\le i\le n$, the stabilizer of $G_i$ is $\mathrm{Gal}(k^s/k_i)$ for some finite field extension of $k_i/k.$ So $G_i$ is defined over $k_i$ and the product of simple factors in its orbit is isomorphic to $$\prod_{\sigma \in \mathrm{Gal}(k^s/k)/\mathrm{Gal}(k^s/k_i)} G_i^\sigma \simeq (\mathrm{Res}_{k_i/k}(G_i))_{k_i} .$$ 
In other words, such product descends to a $k$-group $\mathrm{Res}_{k_i/k}(G_i).$ This gives the desired presentation since $G_i$ is absolutely pseudo-simple over $k_i$.
\end{proof}
After reducing to absolutely pseudo-simple, simply connected $k$-group, we prove the following structure theorem for absolutely pseudo-simple, simply connected groups. 
\begin{thm}\label{structure-abs} Suppose that if $p=2$ then $[k:k^2]\le 2$. For an absolutely pseudo-simple, simply connected $k$-group $G$, there is a finite extension $k'/k$ and a $k'$-group $G'$ such that $$G\simeq \mathrm{Res}_{k'/k}(G'),$$ 
where $G'$ is either
\begin{itemize}
    \item a semisimple, simply connected $k'$-group;
    \item a basic exotic $k'$-group (exists only when $\mathrm{char}(k)\in \{2,3\}$); 
    \item a basic non-reduced $k'$-group (exists only when $\mathrm{char}(k)=2$).
\end{itemize}
\end{thm}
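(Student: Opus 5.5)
I would split into cases according to the two obstructions in Lemma \ref{structure-standard}: whether $G$ is standard and whether the root system of $G_{k^s}$ is reduced.

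\emph{Case 1: $G$ is standard and the root system of $G_{k^s}$ is reduced.} By Lemma \ref{structure-standard}\ref{comp-isom}, the comparison map $i_G\colon G\to \mathrm{Res}_{k'/k}(G_{k'}^{\mathrm{red}})$ is an isomorphism, where $k'$ is the field of definition of the geometric unipotent radical; this $k'$ is a finite, purely inseparable extension of $k$. It remains to check that $G' := G_{k'}^{\mathrm{red}}$ is semisimple and simply connected. Its base change to $\overline{k}$ is $G_{\overline{k}}/\mathscr{R}_{\mathrm{u},\,\overline{k}}(G_{\overline{k}})$, because $k'$ is a field of definition of that radical. By the definition of simple connectedness, this quotient is semisimple and simply connected. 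These properties descend to $k'$. This case covers all $p>3$ by Lemma \ref{structure-standard}\ref{rs-reduced},\ref{standard}.

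\emph{Case 2: the root system of $G_{k^s}$ is not reduced.} Then $p=2$ and $k$ is imperfect, so $[k:k^2]=2$. By Lemma \ref{decomposition-lemma-char2}, $G=G_1\times G_2$ with $G_1$ totally non-reduced. Since $G$ is absolutely pseudo-simple and $G_1\neq 1$, we get $G=G_1$. By \cite[Proposition 10.1.4]{CP16} (recalled in \ref{basic-non-reduced}), $G\simeq \mathrm{Res}_{k'/k}(G')$ for a finite reduced $k$-algebra $k'$ whose fibers are basic non-reduced. I then argue that $k'$ is a field: if $k'=\prod k'_j$ had several factors, $G$ would be a nontrivial product, contradicting pseudo-simplicity. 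This gives the third alternative.

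\emph{Case 3: the root system is reduced but $G$ is not standard.} Then $p\in\{2,3\}$ by Lemma \ref{structure-standard}\ref{standard}.
\begin{itemize}
\item For $p=3$, Lemma \ref{decomposition-lemma-char3} and pseudo-simplicity force $G$ to be exotic, so $G=\mathrm{Res}_{k'/k}(G')$ with basic exotic fibers. As in Case 2, $k'$ must be a field.
\item For $p=2$ with $[k:k^2]=2$ (the perfect case is excluded, since then $G$ would be reductive and hence standard), I would invoke the classification of non-standard pseudo-simple groups in characteristic $2$ from \cite{CGP15,CP16}. Such groups are, up to the non-reduced case, either exotic or generalized standard. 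The generalized standard ones arise from $\mathrm{Res}$ of basic exotic groups or from objects that exist only when $[k:k^2]\ge 4$.
\end{itemize}

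The main obstacle is the characteristic $2$ non-standard case. Here one must show that the hypothesis $[k:k^2]\le 2$ rules out the exceptional constructions of \cite[Chapter 9]{CGP15}, such as type $\mathrm{B}_n$, $\mathrm{C}_n$ and $\mathrm{F}_4$ groups built from nonzero $k^2$-subspaces $V\subsetneq k'$. Simple connectedness, via minimal type as in \cite[Proposition 5.3.3]{CP16}, should also exclude central quotients. My first attempt would be to use that with $[k:k^2]=2$ every such $V$ is a field $k'^2k$-type subspace, which collapses these constructions to $\mathrm{Res}$ of basic exotic groups via \cite[Proposition 7.3.3]{CGP15}.
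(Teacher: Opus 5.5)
\textbf{Verdict: there is a genuine gap.} Your split by standardness and reducedness is equivalent to the paper's split by $p$, and three of your four subcases match the paper's argument:
\begin{itemize}
\item standard with reduced root system, via Lemma~\ref{structure-standard}\ref{comp-isom} (your check that $G^{\mathrm{red}}_{k'}$ is semisimple and simply connected is a detail the paper leaves implicit);
\item non-reduced root system, via Lemma~\ref{decomposition-lemma-char2} and the $\mathrm{Res}$-presentation of totally non-reduced groups;
\item non-standard in characteristic $3$, via Lemma~\ref{decomposition-lemma-char3}.
\end{itemize}
In each of these you correctly use pseudo-simplicity to force $k'$ to be a field.

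The gap is the subcase you flag yourself: $p=2$, $[k:k^2]=2$, root system of $G_{k^s}$ reduced, and $G$ not standard. There you give a plan rather than a proof, so the theorem is not established. The paper closes this case with one structural input, \cite[Proposition 11.1.4]{CGP15}. That result says directly that such a $G$ is isomorphic to $\mathrm{Res}_{k'/k}(G')$ for a finite extension $k'/k$ and a basic exotic $k'$-group $G'$. This is the ingredient your argument is missing.

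Your sketch would not supply it as written, for three reasons.
\begin{itemize}
\item The dichotomy ``exotic or generalized standard'' is muddled. Exotic groups are themselves generalized standard, and the additional constructions that exist only when $[k:k^2]>2$ lie outside the generalized standard class altogether. Your assertion about the subspaces $V$ when $[k:k^2]=2$ is also not a precise statement you could check.
\item Even once $G$ is known to be generalized standard, a perfect absolutely pseudo-simple such $G$ is a priori only a central quotient of $\mathrm{Res}_{k'/k}(G')$ with $G'$ basic exotic. This is not a vacuous worry, because basic exotic groups can have nontrivial center. For example, a split basic exotic group of type $\mathrm{C}_n$ contains the $k$-subgroup $\mathrm{Sp}_{2n}$, and its $\mu_2$ is central in the whole group. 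You only say that simple connectedness ``should'' exclude such quotients, without giving an argument.
\item \cite[Proposition 7.3.3]{CGP15} is what the paper uses for the bijection $\mathrm{H}^1(k,G)\to\mathrm{H}^1(k,\overline{G})$ in Lemma~\ref{torsor-basic-exotic}. It is a statement about $\mathrm{H}^1$, not a structure theorem, so it cannot perform the collapsing to $\mathrm{Res}$ of basic exotic groups that you ask of it.
\end{itemize}
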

\begin{proof} If $p>3,$ then this follows by Lemma \ref{structure-standard}. If $p=3,$ then by Lemma \ref{decomposition-lemma-char3}, $G$ is either standard or exotic. If $G$ is standard, then we could utilize Lemma \ref{structure-standard}\ref{comp-isom}. If $G$ is exotic, then by pseudo-simplicity, $G=\mathrm{Res}_{k'/k}(G')$ for some finite field extension $k'/k$ and basic exotic $k'$-group $G'.$ Suppose now that $p=2.$ By Lemma \ref{decomposition-lemma-char2}, either $G$ is totally non-reduced or $G_{k^s}$ has a reduced root system. In the former case, $G\simeq \mathrm{Res}_{k'/k}(G')$ for some finite extension $k'/k$ and basic non-reduced $k'$-group $G'$ (cf.~\ref{basic-non-reduced}). One may assume that $G_{k^s}$ has a reduced root system. If $G$ is standard, we could utilize Lemma \ref{structure-standard}\ref{comp-isom} again. Otherwise, by \cite[Proposition 11.1.4]{CGP15}, there is a finite extension $k'/k$ and a basic exotic $k'$-group $G'$ such that $G\simeq \mathrm{Res}_{k'/k}(G')$. 
\end{proof}
We conclude with the proof of the main result of this article.
\bcor\label{main-thm} Let $k$ be a field of cohomological dimension at most $2$ and has degree of imperfection at most $1$. The following are equivalent
\benumr 
\item  $\mathrm{H}^1(k,G)=\{\ast\}$ for every semisimple, simply connected $k$-group $G;$
    \item $\mathrm{H}^1(k,G)=\{\ast\}$ for every pseudo-semisimple, simply connected $k$-group $G.$
\eenum
\ecor 
\begin{proof} Recall that cohomological dimension of algebraic extension of $k$ is not increasing and the imperfection degree is insensitive passing to finite extensions. Notice also that if $k'/k$ is a finite field extension and $G'$ is a smooth, affine $k'$-group, then 
$$\mathrm{H}^1(k,\mathrm{Res}_{k'/k}(G'))=\mathrm{H}^1(k',G').$$
Let $G$ be a pseudo-semisimple, simply connected $k$-group. Thanks to Corollary \ref{pseudo-simply-connected}, one could assume that $G$ is absolutely pseudo-simple, simply connected. By the structure theorem \ref{structure-abs}, we are reducing to the case where $G$ is either semisimple, simply connected $k$-group or basic exotic or basic non-reduced. By Lemma \ref{torsor-basic-exotic} and Lemma \ref{non-reduced-vanish}, we reduce to the semisimple, simply connected case, as desired.
\end{proof}

\end{document}